\definecolor{webgreen}{rgb}{0,.5,0}
\definecolor{webbrown}{rgb}{.6,0,0}
\newcommand{\seqnum}[1]{\href{http://oeis.org/#1}{\underline{#1}}}
\theoremstyle{plain}
\newtheorem{teor}{Theorem}
\newtheorem{coro}[teor]{Corollary}
\newtheorem{prop}[teor]{Proposition}
\theoremstyle{definition}
\newtheorem{defi}[teor]{Definition}
\newtheorem{ejem}[teor]{Example}
\newtheorem{caso}{Case}
\theoremstyle{remark}
\def\du{\mathop{\rm du}\nolimits}
\def\pdu{\mathop{\rm pdu}\nolimits}
\def\rdu{\mathop{\rm rdu}\nolimits}
\title{The set of $k$-units modulo $n$}
\author[J.H. Castillo]{John H. Castillo}
\address{John H. Castillo, Departamento de Matemáticas y Estadística, Universidad de Nariño}
\email{jhcastillo@udenar.edu.co}
\author[J. Caranguay]{Jhony Fernando Caranguay Mainguez}
\address{Jhony Fernando Caranguay Mainguez, Departamento de Matemáticas y Estadística, Universidad de Nariño}
\email{jfernandomainguez@gmail.com}
\begin{document}
\maketitle

\begin{abstract}
Let $R$ be a ring with identity, $\mathcal{U}(R)$ the group of units of $R$ and $k$ a positive integer. We say that $a\in \mathcal{U}(R)$ is $k$-unit if $a^k=1$. Particularly, if the ring $R$ is $\mathbb{Z}_n$, for a positive integer $n$, we will say that $a$ is a $k$-unit modulo $n$. We denote with $\mathcal{U}_k(n)$ the set of $k$-units modulo $n$. By $\du_k(n)$ we represent the number of $k$-units modulo $n$ and with $\rdu_k(n)=\frac{\phi(n)}{\du_k(n)}$ the ratio of $k$-units modulo $n$, where $\phi$ is the Euler phi function. Recently, S. K. Chebolu proved that the solutions of the equation $\rdu_2(n)=1$ are the divisors of $24$. The main result of this work, is that for a given $k$, we find the positive integers $n$ such that $\rdu_k(n)=1$. Finally, we give some connections of this equation with Carmichael's numbers and two of its generalizations: Kn\"odel numbers and generalized Carmichael numbers.
\end{abstract}

S. K. Chebolu \cite{Ch12} proved that in the ring $\mathbb{Z}_n$ the square of any unit is $1$ if and only if $n$ is a divisor of $24$. This property is known as the \textit{diagonal property} for the ring $\mathbb{Z}_n$. Later, K. Genzlinger and K. Lockridge  \cite{GL15} introduced the function $\du(R)$, which is the number of involutions in $R$ (that is the elements in $R$ such that $a^2=1$), and provided another proof to Chebolu's result about the diagonal property. The diagonal property also has been studied in other rings. For instance, S. K. Chebolu \cite{ch13} found that the polynomial ring $Z_n[x_1,x_2,\ldots,x_m]$ satisfies the diagonal property if and only if $n$ is a divisor of $12$ and S. K. Chebolu et al. \cite{ch15} also characterized the group algebras that verifies this property.

Let $R$ be a ring with identity and $\mathcal{U}(R)$ the group of units of $R$. The aim of this paper is to study the elements of a ring with the following property: for a given $k\in \mathbb{Z}^+$, we say that an element $a$ in $\mathcal{U}(R)$ is a $k$-unit if $a^k=1$. So, we ask for the number of this elements, and for that we extend the definitions of the functions given by  K. Genzlinger and K. Lockridge  \cite{GL15}, particularly $\du_k(R)$ will represent the number of $k$-units of $R$. Here we present a formula for this function when $\mathcal{U}(R)$ can be expressed as a finite direct product of finite cyclic groups and when $R=\mathbb{Z}_n$. Furthermore, we study the case when $R=\mathbb{Z}_n$ and each unit is a $k$-unit. Previously, as mentioned before, this problem has been considered when $k=2$ and more generally for fields and group algebras when $k$ is a prime number, see \cite{ch15}.

In the other hand, a well studied topic in number theory are the Carmichael's numbers, which in terms of the $k$-unit concept, are composite positive integers such that any unit is an $(n-1)$-unit. Here, we find some connections between the equation $\rdu_k(n)$ and the concepts of Kn\"odel and  generalized Carmichael numbers.

In the sequel, for $x$ an element of a group $G$, by $|x|$ we denote the  order of $x$. Besides, for a prime number $p$ and a positive integer $n$, the symbol $\nu_p(n)$ means the exponent of the greatest power of $p$ that divides $n$, $\gcd(a,b)$ denotes the  greatest common divisor of $a$ and $b$,  and $\phi$ is the Euler's totient function. If $A=\{a_1,a_2,\ldots,a_n\}$ and $f$ is a defined function on $A$, we write 
$$\prod_{a\in A}f(a)=f(a_1)\cdot f(a_2)\cdots f(a_n),$$
and when $A=\varnothing$, we assume that $\prod_{a\in A}f(a)=1$.

\section{Set of $k$-units of a ring}
In this section we give some definitions and get some preliminary results.
\begin{defi} Let $R$ be a ring with identity, $a\in R$ and $k\in \mathbb{Z}^+$. We say that $a$ is a $k$-unit of $R$ if $a^{k}=1$. We will denote with $\mathcal{U}_k(R)$ the set of $k$-units of $R$.
\end{defi}

When $R=\mathbb{Z}_n$, for a given $n\in \mathbb{Z}^+$, we will use the symbol $\mathcal{U}_k(n)$ to denote the set of $k$-units of $\mathbb{Z}_n$, and we will call it the set of $k$-units modulo $n$.

\begin{ejem} In $\mathbb{Z}_5$, when we square the elements of $\mathcal{U}(\mathbb{Z}_5)$ we have that
\[
1^2=1,\;2^2=4,\;3^2=4,\; 4^2=1.
\]
Then, the $2$-units modulo $5$ are $1$ and $4$, that is, $\mathcal{U}_2(5)=\left\{1, 4\right\}$.
\end{ejem}
We can verify that $\mathcal{U}_2(5)$ is a subgroup of $\mathcal{U}(\mathbb{Z}_5)$. Actually, this is a property for rings with an abelian unit group.

\begin{teor}\label{teor1}
Let $R$ be a ring with identity such that $\mathcal{U}(R)$ is an abelian group. Then $\mathcal{U}_k(R)$ is a subgroup of $\mathcal{U}(R)$.
\end{teor}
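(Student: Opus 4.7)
The plan is to apply the standard subgroup criterion to $\mathcal{U}_k(R) \subseteq \mathcal{U}(R)$. Since every $k$-unit is, by definition, a unit, the containment is already clear; what remains is nonemptiness, closure under multiplication, and closure under inversion. The abelian hypothesis on $\mathcal{U}(R)$ enters at exactly one place, namely to freely rearrange factors when expanding $(ab)^k$.

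First I would observe that $1 \in \mathcal{U}_k(R)$ because $1^k = 1$, so the set is nonempty. Next, for closure, I would take $a, b \in \mathcal{U}_k(R)$ and use commutativity in $\mathcal{U}(R)$ to write $(ab)^k = a^k b^k = 1 \cdot 1 = 1$, concluding that $ab \in \mathcal{U}_k(R)$. Finally, for inverses, I would note that if $a^k = 1$ then $(a^{-1})^k = (a^k)^{-1} = 1^{-1} = 1$, so $a^{-1} \in \mathcal{U}_k(R)$. These three facts together yield the subgroup property.

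There is essentially no obstacle here; the only point worth flagging is that $(ab)^k = a^k b^k$ requires commutativity, which is precisely the hypothesis on $\mathcal{U}(R)$. If one wanted to shorten the argument, one could instead invoke the one-step subgroup test: for $a, b \in \mathcal{U}_k(R)$, compute $(ab^{-1})^k = a^k (b^{-1})^k = a^k (b^k)^{-1} = 1$, which simultaneously handles closure and inversion. Either version produces a clean, short proof, and I would present the three-step version for didactic transparency since this is an introductory lemma in the paper.
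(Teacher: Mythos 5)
Your proof is correct and follows essentially the same route as the paper, which uses the one-step subgroup test you mention at the end: $(ab^{-1})^k = a^k(b^k)^{-1} = 1$, relying on commutativity of $\mathcal{U}(R)$ exactly as you do. The three-step presentation is just a more expanded version of the same idea.
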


\begin{proof} It is sufficient to prove that if $a$, $b$ $\in \mathcal{U}_k(R)$, then $ab^{-1}\in \mathcal{U}_k(R)$. Indeed, if $a^{k} = 1$ and $b^k= 1$, then $(ab^{-1})^k=a^k(b^k)^{-1}= 1$.
\end{proof}

When $\mathcal{U}_k(R)$ is a finite set, we will denote with $\du_k(R)$ the number of $k$-units of $R$; that is, 
\begin{equation}\label{du_def}
\du_k(R)=|\mathcal{U}_k(R)|.
\end{equation} 
Specially, if $R=\mathbb{Z}_n$,  $\du_k(n)$ will represents the number of $k$-units modulo $n$.

Although, in our definitions $k$ can be any positive integer, actually we could restrict it to the set of divisors of $|\mathcal{U}(R)|$, of course when the latter is finite. In fact,  take $d=\gcd(k,|\mathcal{U}(R)|)$. If $x\in  \mathcal{U}_k(R)$, then  $x^k=1$, and therefore the order of $x$ divides $k$. Moreover, as $|x|$ is a divisor of  $|\mathcal{U}(R)|$, also divides $d$. Thus, $x^d=1$, and then $x\in  \mathcal{U}_d(R)$, which implies that $\mathcal{U}_k(R)\subseteq\mathcal{U}_d(R)$. Similarly, we can prove that if $x\in  \mathcal{U}_d(R)$, then $x\in  \mathcal{U}_k(R)$. So, we have proved that $\mathcal{U}_k(R)=\mathcal{U}_d(R)$, result that we summarize in the next proposition.

\begin{prop}\label{prop1}
Let $k$ be a positive integer and assume that $\mathcal{U}(R)$ is finite. Then
$$ \mathcal{U}_k(R)=\mathcal{U}_d(R), $$
where $d=\gcd(k,|\mathcal{U}(R)|)$.
\end{prop}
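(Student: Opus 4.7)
The plan is to establish the equality $\mathcal{U}_k(R) = \mathcal{U}_d(R)$ by showing both inclusions, using nothing more than Lagrange's theorem applied to the finite group $\mathcal{U}(R)$.

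First I would prove $\mathcal{U}_k(R) \subseteq \mathcal{U}_d(R)$. Take $x \in \mathcal{U}_k(R)$, so $x^k = 1$; then the order $|x|$ of $x$ in the group $\mathcal{U}(R)$ divides $k$. Because $\mathcal{U}(R)$ is finite, Lagrange's theorem gives $|x| \mid |\mathcal{U}(R)|$. Hence $|x|$ is a common divisor of $k$ and $|\mathcal{U}(R)|$, so $|x| \mid d$, which yields $x^d = 1$ and therefore $x \in \mathcal{U}_d(R)$.

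For the reverse inclusion $\mathcal{U}_d(R) \subseteq \mathcal{U}_k(R)$, the argument is even more direct: since $d = \gcd(k, |\mathcal{U}(R)|)$ divides $k$, any $x$ satisfying $x^d = 1$ also satisfies $x^k = (x^d)^{k/d} = 1$, so $x \in \mathcal{U}_k(R)$.

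There is no real obstacle here; the proposition is an immediate consequence of Lagrange. Two small remarks are worth recording: no commutativity hypothesis on $\mathcal{U}(R)$ is needed, in contrast with Theorem \ref{teor1}, since Lagrange's theorem holds for any finite group; and the equality $\mathcal{U}_k(R) = \mathcal{U}_d(R)$ is what licenses restricting attention in the sequel to values of $k$ dividing $|\mathcal{U}(R)|$, i.e., to divisors of $\phi(n)$ when $R = \mathbb{Z}_n$.
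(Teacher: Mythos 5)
Your proof is correct and follows essentially the same route as the paper: the forward inclusion via the order of $x$ dividing both $k$ and $|\mathcal{U}(R)|$ (hence $d$), and the reverse inclusion from $d \mid k$, which you spell out more explicitly than the paper's ``similarly.'' Your side remarks (no commutativity needed; this justifies restricting $k$ to divisors of $|\mathcal{U}(R)|$) match the surrounding discussion in the paper.
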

The following result is an special case of the previous one.
\begin{teor}\label{teor2}
If $\mathcal{U}(R)$ is a finite cyclic group, then $\du_k(R)=\gcd(k,|\mathcal{U}(R)|)$.
\end{teor}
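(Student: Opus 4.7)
The plan is to reduce immediately to the divisor case via Proposition \ref{prop1} and then exploit the classical fact that a finite cyclic group has exactly one subgroup of each order dividing the group order.

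First I would set $n=|\mathcal{U}(R)|$ and $d=\gcd(k,n)$. By Proposition \ref{prop1} we have $\mathcal{U}_k(R)=\mathcal{U}_d(R)$, so it suffices to prove that $|\mathcal{U}_d(R)|=d$ whenever $d$ divides $n$. This reduction is the natural first move because it eliminates the awkward dependence on $k$ and replaces it with a divisor of $n$, which is much easier to handle inside a cyclic group.

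Next I would fix a generator $g$ of $\mathcal{U}(R)$, so every element has the unique form $g^i$ with $0\le i<n$. Then $g^i\in \mathcal{U}_d(R)$ iff $(g^i)^d=1$ iff $n\mid id$. Since $d\mid n$, writing $n=(n/d)\cdot d$ shows that $n\mid id$ is equivalent to $(n/d)\mid i$. The admissible exponents are therefore exactly
\[
i=0,\;\tfrac{n}{d},\;\tfrac{2n}{d},\;\ldots,\;\tfrac{(d-1)n}{d},
\]
which are $d$ distinct values in $\{0,1,\ldots,n-1\}$. Consequently $\du_k(R)=|\mathcal{U}_d(R)|=d=\gcd(k,n)$, as claimed.

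There is really no hard step here; the argument is a straight counting inside a cyclic group after the reduction via Proposition \ref{prop1}. The only point to be careful about is noting that the reduction requires $d\mid n$ in order for the divisibility rearrangement $n\mid id \Longleftrightarrow (n/d)\mid i$ to be valid, which is precisely why invoking Proposition \ref{prop1} first (rather than working directly with $k$) streamlines the proof.
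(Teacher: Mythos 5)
Your proof is correct. It is essentially a hybrid of the paper's two proofs of this theorem: the paper's first proof works directly with $k$, writes $x=g^l$, and counts the solutions of $kl\equiv 0 \pmod{|\mathcal{U}(R)|}$ by citing the standard fact that this congruence has $\gcd(k,|\mathcal{U}(R)|)$ solutions; the paper's second proof performs the reduction to $d=\gcd(k,n)$ via Proposition \ref{prop1}, as you do, but then counts elements of order dividing $d$ using $\sum_{e\mid d}\phi(e)=d$. Your route does the Proposition \ref{prop1} reduction first and then counts exponents directly, which buys you a self-contained argument: because $d\mid n$, the divisibility $n\mid id$ collapses to $(n/d)\mid i$ and the count of admissible exponents is immediate, with no need to invoke the external result on the number of solutions of a linear congruence or the divisor-sum identity for $\phi$. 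The only point worth flagging is that you should say explicitly that distinct exponents $0\le i<n$ give distinct group elements (i.e., that $g$ has order exactly $n$), which you implicitly use when concluding that the $d$ admissible exponents yield $d$ distinct $k$-units; this is standard and does not affect correctness.
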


\begin{proof} Let $x\in \mathcal{U}_k(R)$ and $g$ a generator of $\mathcal{U}(R)$. So, there exists an integer $0\leq l<|\mathcal{U}(R)|$ such that $x=g^l$.

Then, $x^k=g^{kl}=1$ if and only if $kl\equiv 0 \pmod{|\mathcal{U}(R)|}$. The last congruence has $\gcd(k,|\mathcal{U}(R)|)$ solutions modulo $|\mathcal{U}(R)|$, see \cite[Prop. 3.3.1]{R90}. Thus, $x$ takes $\gcd(k,|\mathcal{U}(R)|)$ values and, therefore, $\du_k(R)=\gcd(k,|\mathcal{U}(R)|)$.
\end{proof}

We can give another proof to the Theorem \ref{teor2} using the following property of the Euler's $\phi$ function, $\sum_{e|d} \phi(e)=d$, see \cite[Prop. 2.2.4]{R90}.

\begin{proof} Take $d=\gcd(k,|\mathcal{U}(R)|)$. Then $x\in \mathcal{U}_k(R)= \mathcal{U}_d(R)$ if and only if the order of $x$ divides  $d$. Thus, the number of $k$-units in $R$ is equal to the number of elements of $\mathcal{U}(R)$ such that its order is a divisor of $d$. So,
\begin{align*}
\du_k(R)&=\du_d(R)=|\{x\in \mathcal{U}(R): |x|\text{ divides }d\}|\\
&=\sum_{e|d} |\{x\in \mathcal{U}(R): |x|=e\}|.
\end{align*}
As $e$ is a divisor of $d$, then it is also a divisor of $|\mathcal{U}(R)|$. Therefore, the number of elements of order $e$ in $\mathcal{U}(R)$ is $\phi(e)$, see \cite[Thm.\ 4.4]{G09} and thus
$$\du_k(R)=\sum_{e|d} \phi(e)=d.$$
\end{proof}
Now we are interested in finding an expression to this function when the group $\mathcal{U}(R)$ is isomorphic to the direct product of finite cyclic groups. Here and subsequently $C_r$ denotes the cyclic group of order $r$.

In some occasions we will apply our definition of $k$-unit and the function $\du_k$, $\pdu_k$ and $\rdu_k$ for groups. Previously, it was unnecessary, because it might be ambiguous, for instance, $\du_k(\mathbb{Z}_n)$ could be understand as the quantity of $k$-units of a ring or a group.

\begin{teor} \label{teor3}
Let $R$ be a commutative ring with identity. If $\mathcal{U}(R)\cong C_{r_1}\times\cdots \times C_{r_s}$ for some positive integers $r_1,\ldots, r_s$, then
\begin{equation*}
\du_k(R)=\prod^{s}_{i=1}\gcd(k,r_i).
\end{equation*}

\end{teor}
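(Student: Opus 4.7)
The plan is to reduce the count coordinate-by-coordinate using the cyclic case already settled in Theorem \ref{teor2}. Let $\varphi:\mathcal{U}(R)\to C_{r_1}\times\cdots\times C_{r_s}$ be the given isomorphism. Since $\varphi$ is a group homomorphism, $a^k=1$ in $\mathcal{U}(R)$ if and only if $\varphi(a)^k=(1,\ldots,1)$, so $\varphi$ restricts to a bijection between $\mathcal{U}_k(R)$ and the set of $k$-units of the direct product. Hence it suffices to count $k$-units in $C_{r_1}\times\cdots\times C_{r_s}$.

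Next, I would observe that an element $(x_1,\ldots,x_s)$ of the direct product satisfies $(x_1,\ldots,x_s)^k=(x_1^k,\ldots,x_s^k)=(1,\ldots,1)$ if and only if $x_i^k=1$ in $C_{r_i}$ for every $i$. Therefore the set of $k$-units of the direct product is the Cartesian product $\mathcal{U}_k(C_{r_1})\times\cdots\times\mathcal{U}_k(C_{r_s})$, and its cardinality is
\begin{equation*}
\du_k(C_{r_1}\times\cdots\times C_{r_s})=\prod_{i=1}^{s}\du_k(C_{r_i}).
\end{equation*}
Here I am implicitly using the extension of $\du_k$ to groups mentioned in the paragraph just before the theorem.

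Finally, I would apply Theorem \ref{teor2} to each factor: since $C_{r_i}$ is finite cyclic of order $r_i$, we have $\du_k(C_{r_i})=\gcd(k,r_i)$. Combining this with the product formula above yields
\begin{equation*}
\du_k(R)=\prod_{i=1}^{s}\gcd(k,r_i),
\end{equation*}
as claimed. I do not anticipate any real obstacle; the only delicate point is being careful that the function $\du_k$ has been (or is being) extended from rings to groups so that $\du_k(C_{r_i})$ is meaningful, which the preceding remark in the paper already licenses.
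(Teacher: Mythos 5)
Your proposal is correct and follows essentially the same route as the paper: transport the count through the isomorphism, observe that being a $k$-unit in the direct product is a componentwise condition, and apply Theorem \ref{teor2} to each cyclic factor. The only difference is that you make explicit the (harmless) point about extending $\du_k$ to groups, which the paper also licenses in the paragraph preceding the theorem.
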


\begin{proof} By the given isomorphism we get that $\du_k(R)=\du_k(C_{r_1}\times\cdots\times C_{r_s})$. Let $a$ be a $k$-unit of $R$ and $b$ its image under the isomorphism. Then $b$ is a $k$-unit of $C_{r_1}\times\cdots\times C_{r_s}$ if and only if each $i$-th component of $b$ is a $k$-unit in $C_{r_i}$. So, $\du_k(R)=\du_k(C_{r_1})\cdots \du_k(C_{r_s})$. In this way, from Theorem \ref{teor2}, we obtain that $\du_k(R)=\gcd(k,|C_{r_1}|)\cdots \gcd(k,|C_{r_s}|)$, and the result follows.
\end{proof}

If $\mathcal{U}(R)$ is finite, we define the proportion and ratio functions of $k$-units of $R$, $\pdu_k(R)$ and $\rdu_k(R)$, respectively as follows
\begin{equation}
\begin{split}	
\pdu_k(R)&=\frac{\du_k(R)}{|\mathcal{U}(R)|},\\
\rdu_k(R)&=\frac{|\mathcal{U}(R)|}{\du_k(R)}=\frac{1}{\pdu_k(R)}.
\end{split}
\label{ec_radio}
\end{equation}

When $\mathcal{U}(R)$ is an abelian group, Theorem \ref{teor1} and Lagrange's Theorem, see \cite[Thm.~7.1]{G09}, guarantee that $\du_k(R)$ divides $|\mathcal{U}(R)|$, and therefore $\rdu_k(R)\in \mathbb{Z}^+$. For the ring $\mathbb{Z}_n$, we will use $\pdu_k(n)$ and $\rdu_k(n)$ to denote the proportion and ratio functions of the $k$-units modulo $n$, respectively. Since $|\mathcal{U}(\mathbb{Z}_n)|=\phi(n)$, we have that 
$$\pdu_k(n)=\frac{\du_k(n)}{\phi(n)}$$ and 
$$\rdu_k(n)=\frac{\phi(n)}{\du_k(n)}.$$

\begin{ejem} Previously, we got that $\mathcal{U}_2(5)=\left\{ 1, 4 \right\}$. Thus,
\begin{center}
$\du_2(5)=|\mathcal{U}_2(5)|=2$, $\pdu_2(5)=\dfrac{\du_2(5)}{|\mathcal{U}(\mathbb{Z}_5)|}=\dfrac{1}{2}$, and $\rdu_2(5)=\dfrac{1}{\pdu_k(5)}=2$.
\end{center}
\end{ejem}

\section{The group of $k$-units modulo $n$}

In this section we find an expression for $\du_k(n)$ from the prime factorization of $n$.

The following theorem shows that the functions given by \eqref{du_def} and \eqref{ec_radio} are multiplicatives when $R=\mathbb{Z}_n$, which implies that the task is reduced to calculate $\du_k$ for powers of primes.

\begin{teor}\label{teor_multiplica}
The functions $\du_k$, $\pdu_k$ and $\rdu_k$ defined on $\mathbb{Z}_n$ are multiplicatives.
\end{teor}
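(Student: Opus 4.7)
The plan is to reduce everything to the Chinese Remainder Theorem. Assume $\gcd(m,n)=1$. The CRT furnishes a ring isomorphism $\mathbb{Z}_{mn}\cong\mathbb{Z}_m\times\mathbb{Z}_n$, which restricts to a group isomorphism of the unit groups, $\mathcal{U}(\mathbb{Z}_{mn})\cong\mathcal{U}(\mathbb{Z}_m)\times\mathcal{U}(\mathbb{Z}_n)$. This isomorphism is key because it converts the single congruence $a^k\equiv 1\pmod{mn}$ into a pair of congruences $a^k\equiv 1\pmod{m}$ and $a^k\equiv 1\pmod{n}$.

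Next I would carry out the counting for $\du_k$. An element $(a,b)$ of $\mathcal{U}(\mathbb{Z}_m)\times\mathcal{U}(\mathbb{Z}_n)$ satisfies $(a,b)^k=(1,1)$ if and only if $a^k=1$ in $\mathbb{Z}_m$ and $b^k=1$ in $\mathbb{Z}_n$. Therefore the isomorphism restricts to a bijection between $\mathcal{U}_k(mn)$ and $\mathcal{U}_k(m)\times\mathcal{U}_k(n)$, giving
\begin{equation*}
\du_k(mn)=\du_k(m)\cdot\du_k(n).
\end{equation*}
(This can also be read off from Theorem \ref{teor3} after decomposing each $\mathcal{U}(\mathbb{Z}_{p^a})$ into cyclic factors, but the direct CRT argument is cleaner and does not require knowledge of the structure of $\mathcal{U}(\mathbb{Z}_{p^a})$.)

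Finally, the multiplicativity of $\pdu_k$ and $\rdu_k$ follows by combining the above with the well-known multiplicativity of Euler's totient. Indeed,
\begin{equation*}
\pdu_k(mn)=\frac{\du_k(mn)}{\phi(mn)}=\frac{\du_k(m)\,\du_k(n)}{\phi(m)\,\phi(n)}=\pdu_k(m)\cdot\pdu_k(n),
\end{equation*}
and $\rdu_k=1/\pdu_k$ is therefore multiplicative as well.

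There is no serious obstacle here: the entire proof is a routine application of the CRT together with the observation that the $k$-th power map acts coordinatewise on a direct product. The only point worth flagging is that the symbols $\du_k$, $\pdu_k$, $\rdu_k$ in the statement are really functions of the ring $\mathbb{Z}_n$, so strictly speaking one should confirm that being a $k$-unit is equivalent to being a unit whose $k$-th power is $1$, which is immediate from the definition since $a^k=1$ forces $a$ to be a unit.
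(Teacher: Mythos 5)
Your proof is correct and follows essentially the same route as the paper: the Chinese Remainder Theorem gives $\mathbb{Z}_{mn}\cong\mathbb{Z}_m\times\mathbb{Z}_n$, the $k$-th power map acts coordinatewise so $\mathcal{U}_k(mn)$ is in bijection with $\mathcal{U}_k(m)\times\mathcal{U}_k(n)$, and multiplicativity of $\pdu_k$ and $\rdu_k$ then follows from that of $\phi$. No issues.
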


\begin{proof} We will demonstrate that if $s$ and $t$ are relatively primes positives integers, then $\du_k(st)=\du_k(s)\du_k(t)$.

By the Chinese Remainder Theorem, see \cite[Thm. 1', p.~35]{R90}, we have that $\mathbb{Z}_{st}\cong \mathbb{Z}_s\times \mathbb{Z}_t$, so the number of $k$-units modulo $n$ is equal to the number of $k$-units in  $\mathbb{Z}_s\times \mathbb{Z}_t$.

Let $(x, y)\in \mathbb{Z}_s\times \mathbb{Z}_t$ be a $k$-unit. Then $(x, y)^k=(x^k, y^k)=(1, 1)$ if and only if $x^k=1$ in $\mathbb{Z}_s$ and $y^k=1$ in $\mathbb{Z}_t$. Thus, $\left(x, y\right)$ is a $k$-unit of $\mathbb{Z}_s\times \mathbb{Z}_t$ if and only if $x$ is a $k$-unit modulo $s$ and $y$ is a $k$-unit modulo $t$. Therefore, $\du_k(st)=\du_k(\mathbb{Z}_s\times \mathbb{Z}_t)=\du_k(s)\du_k(t)$.

Since $\du_k$ and $\phi$ are multiplicatives, we have that
\begin{align*}
\pdu_k(st)&=\frac{\du_k(st)}{\phi(st)}=\left(\frac{\du_k(s)}{\phi(s)}\right)\left(\frac{\du_k(t)}{\phi(t)}\right)=\pdu_k(s)\pdu_k(t), \text{ and}\\
\rdu_k(st)&=\frac{1}{\pdu_k(st)}=\left(\frac{1}{\pdu_k(s)}\right)\left(\frac{1}{\pdu_k(t)}\right)=\rdu_k(s)\rdu_k(t).
\end{align*}

\end{proof}

By the last theorem and with the aim of finding an expression to $\du_k(n)$ using the prime factorization of $n$, we will consider when $n$ is a prime power. In order to apply the Theorem \ref{teor3}, we recall the following result, see \cite[p.\ 160]{G09}, which expresses $\mathcal{U}(\mathbb{Z}_{p^\alpha})$ as an external direct product of cyclic subgroups, where $p$ is a prime number and $\alpha$ is a positive integer.

\begin{prop} \label{prop2}
Let $p$ be a prime number and $\alpha$ a positive integer. Then
\[
\mathcal{U}(\mathbb{Z}_{p^\alpha})\cong
\begin{cases}
C_1,  & \text{if $p^\alpha=2^1$;}\\
C_2, & \text{if $p^\alpha=2^2$;}\\
C_2\times C_{2^{\alpha-2}}, & \text{if $p=2$ and $\alpha \geq 3$;}\\
C_{\phi(p^\alpha)}, & \text{if $p$ is odd.}
\end{cases}
\]
\end{prop}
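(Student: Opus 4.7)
The plan is to split the proof into three regimes: small powers of $2$, higher powers of $2$, and odd prime powers, since the statement itself has this case structure. For $p^{\alpha}=2$ and $p^{\alpha}=4$, the group $\mathcal{U}(\mathbb{Z}_{p^\alpha})$ has order $1$ and $2$ respectively, so the result is immediate by inspection.

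For odd primes, I would first establish that $\mathcal{U}(\mathbb{Z}_p)$ is cyclic. The cleanest route is to show that any finite subgroup of the multiplicative group of a field is cyclic: one counts elements of each order using the fact that the polynomial $x^d-1$ has at most $d$ roots in $\mathbb{Z}_p$, combined with the identity $\sum_{e\mid d}\phi(e)=d$. Next, I would lift a primitive root $g$ modulo $p$ to a primitive root modulo $p^\alpha$ for every $\alpha\ge 1$. The standard trick is to replace $g$ with $g+p$ if necessary so that $g^{p-1}\not\equiv 1\pmod{p^2}$, and then prove by induction on $\alpha$ that $g^{p^{\alpha-2}(p-1)}\equiv 1+c\,p^{\alpha-1}\pmod{p^\alpha}$ with $\gcd(c,p)=1$. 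This guarantees that $g$ has order exactly $\phi(p^\alpha)=p^{\alpha-1}(p-1)$, giving $\mathcal{U}(\mathbb{Z}_{p^\alpha})\cong C_{\phi(p^\alpha)}$.

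For $p=2$ and $\alpha\ge 3$, the group is not cyclic, so the task is to exhibit an internal direct product decomposition $\langle -1\rangle\times\langle 5\rangle$. I would prove by induction on $\alpha$ the key congruence
\begin{equation*}
5^{2^{\alpha-3}}\equiv 1+2^{\alpha-1}\pmod{2^\alpha}\quad(\alpha\ge 3),
\end{equation*}
which forces the order of $5$ modulo $2^{\alpha}$ to equal $2^{\alpha-2}$. Since every power of $5$ is congruent to $1$ modulo $4$, the element $-1$ lies outside $\langle 5\rangle$, so $\langle -1\rangle\cap\langle 5\rangle=\{1\}$. Comparing orders, $|\langle -1\rangle|\cdot |\langle 5\rangle|=2\cdot 2^{\alpha-2}=\phi(2^\alpha)$, so the product is all of $\mathcal{U}(\mathbb{Z}_{2^\alpha})$, yielding the desired $C_2\times C_{2^{\alpha-2}}$.

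The main obstacle is the inductive lifting step in both of the nontrivial cases: for odd $p$, one must carefully track the $p$-adic valuation of $g^{p-1}-1$ under repeated raising to the $p$-th power using the binomial theorem, and for $p=2$ one must similarly track the $2$-adic valuation of $5^{2^{k}}-1$. In each case the calculation is essentially a lifting-the-exponent argument, and the delicate point is to verify that $\nu_p$ increases by exactly one at each step, which is what ultimately pins down the orders and rules out the cyclic structure in the $2$-adic case.
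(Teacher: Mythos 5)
The paper offers no proof of this proposition at all: it is quoted as a known structure theorem with a citation to Gallian, so there is no in-paper argument to compare against. Your proposal is the standard textbook proof of the structure of $\mathcal{U}(\mathbb{Z}_{p^\alpha})$, and it is correct: the cyclicity of $\mathcal{U}(\mathbb{Z}_p)$ via counting roots of $x^d-1$ together with $\sum_{e\mid d}\phi(e)=d$, the lifting of a primitive root after adjusting $g$ to $g+p$ so that $g^{p-1}\not\equiv 1\pmod{p^2}$, and the internal decomposition $\langle -1\rangle\times\langle 5\rangle$ for $2^{\alpha}$ with $\alpha\ge 3$ are all sound. The key congruences you state are the right ones; for instance $5^{2^{\alpha-3}}\equiv 1+2^{\alpha-1}\pmod{2^{\alpha}}$ holds, and squaring it shows the order of $5$ divides $2^{\alpha-2}$ while the congruence itself shows it does not divide $2^{\alpha-3}$, which pins the order at exactly $2^{\alpha-2}$; the intersection argument via residues modulo $4$ and the order count $2\cdot 2^{\alpha-2}=\phi(2^{\alpha})$ then finish that case. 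The inductive lifting steps are sketched rather than carried out, but the delicate point you flag --- that the $p$-adic valuation increases by exactly one at each stage, which for odd $p$ rests on $p\mid\binom{p}{2}$ in the binomial expansion of $(1+cp^{j})^{p}$ --- is precisely the right one, so the sketch fills in to a complete proof. What your approach buys is self-containedness; what the paper's citation buys is brevity, since this classical result is not where the novelty of the article lies.
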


\begin{teor}\label{alpha3}
If $\alpha$ is a positive integer greater than or equal to $3$, then
\[
\du_k(2^{\alpha})=
\begin{cases}
1,  & \text{if $k$ is odd;}\\
2\gcd(k,2^{\alpha-2}), & \text{if $k$ is even.}
\end{cases}
\]
\end{teor}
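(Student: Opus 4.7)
The plan is to combine Proposition \ref{prop2} (the structure of $\mathcal{U}(\mathbb{Z}_{2^\alpha})$) with Theorem \ref{teor3} (the product formula for $\du_k$ over a direct product of cyclic groups). Since $\alpha \geq 3$, Proposition \ref{prop2} gives $\mathcal{U}(\mathbb{Z}_{2^\alpha}) \cong C_2 \times C_{2^{\alpha-2}}$, and Theorem \ref{teor3} then yields
\begin{equation*}
\du_k(2^\alpha) = \gcd(k,2)\cdot \gcd(k, 2^{\alpha-2}).
\end{equation*}

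From here, the proof reduces to a parity case-split on $k$. If $k$ is odd, then $\gcd(k,2)=1$ and $\gcd(k,2^{\alpha-2})=1$, so the product equals $1$. If $k$ is even, then $\gcd(k,2)=2$, and the formula becomes $2\gcd(k,2^{\alpha-2})$, matching the claim exactly.

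There is essentially no obstacle: everything needed is already packaged in the two earlier results, and the argument is purely a one-line substitution followed by evaluating $\gcd(k,2)$ according to parity. The only thing to be careful about is to note explicitly that the hypothesis $\alpha \geq 3$ is what makes the third branch of Proposition \ref{prop2} applicable (so that the decomposition into two cyclic factors is valid and $2^{\alpha-2} \geq 2$ makes sense). No further number-theoretic input is required.
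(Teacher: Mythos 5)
Your proposal is correct and follows exactly the route the paper takes: apply Proposition \ref{prop2} to get $\mathcal{U}(\mathbb{Z}_{2^\alpha})\cong C_2\times C_{2^{\alpha-2}}$, then Theorem \ref{teor3} to obtain $\du_k(2^\alpha)=\gcd(k,2)\gcd(k,2^{\alpha-2})$. The paper actually stops at that product formula and leaves the parity case-split implicit, so your version is, if anything, slightly more complete.
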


\begin{proof} By Proposition \ref{prop2}, we have that $\mathcal{U}(\mathbb{Z}_{2^\alpha})\cong C_2\times C_{2^{\alpha-2}}$. Applying Theorem \ref{teor3}, we get that $\du_k(2^\alpha)=\gcd(k, 2)\gcd(k, 2^{\alpha-2})$.
\end{proof}

From theorems \ref{teor2}, \ref{teor_multiplica} and \ref{alpha3} we obtain the following result.

\begin{teor}\label{du}
Assume that $n=2^{\alpha}m$, where $\alpha$ is a non-negative integer and $m$ is an odd positive integer. If $m=\prod_{i=1}^{r}p_i^{r_i}$ is the prime factorization of $m$, then
\begin{equation*}
\du_k(n)=\begin{cases}
\prod_{i=1}^r\gcd(k,\phi(p_i^{r_i})) & \text{if $k$ is odd or $\alpha=1$;}\\
2\prod_{i=1}^r\gcd(k,\phi(p_i^{r_i})) & \text{if $k$ is even and $\alpha=2$;}\\
2\gcd(k,2^{\alpha-2})\prod_{i=1}^r\gcd(k,\phi(p_i^{r_i})) & \text{if $k$ is even and $\alpha \geq 3$.}
\end{cases}
\end{equation*}

\end{teor}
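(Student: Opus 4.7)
The plan is to reassemble the formula from the building blocks already established. Since $n=2^{\alpha}\prod_{i=1}^{r}p_i^{r_i}$ is a factorization into pairwise coprime parts, iterated application of Theorem \ref{teor_multiplica} (multiplicativity of $\du_k$) gives
$$\du_k(n) = \du_k(2^{\alpha})\prod_{i=1}^{r}\du_k(p_i^{r_i}),$$
so the work reduces to evaluating $\du_k$ at each prime-power factor.

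For the odd prime powers I would apply Proposition \ref{prop2}, which says $\mathcal{U}(\mathbb{Z}_{p_i^{r_i}})$ is cyclic of order $\phi(p_i^{r_i})$. Theorem \ref{teor2} then yields $\du_k(p_i^{r_i}) = \gcd(k,\phi(p_i^{r_i}))$ with no dependence on the parity of $k$; this handles the product factor uniformly across all three branches of the conclusion.

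For the power of $2$ I would split on $\alpha$. When $\alpha\in\{0,1\}$ the unit group $\mathcal{U}(\mathbb{Z}_{2^{\alpha}})$ is trivial, so $\du_k(2^{\alpha})=1$ for every $k$. When $\alpha=2$, Proposition \ref{prop2} gives $\mathcal{U}(\mathbb{Z}_4)\cong C_2$, and Theorem \ref{teor2} produces $\du_k(4)=\gcd(k,2)$, which is $1$ for $k$ odd and $2$ for $k$ even. When $\alpha\geq 3$, I invoke Theorem \ref{alpha3} verbatim, obtaining $1$ for $k$ odd and $2\gcd(k,2^{\alpha-2})$ for $k$ even.

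The final step is pure bookkeeping: collating these evaluations of $\du_k(2^{\alpha})$ against the parity of $k$ matches the three displayed branches. The only subtlety worth remarking is that the first branch genuinely unifies the two sub-situations \emph{$k$ odd} and \emph{$\alpha=1$}, since in each case the $2$-part contributes a factor of $1$; apart from this routine case-matching there is no real obstacle, as all the structural work sits in Theorems \ref{teor2}, \ref{teor_multiplica}, and \ref{alpha3}.
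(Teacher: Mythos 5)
Your proposal is correct and is essentially the paper's own argument: the paper gives no written proof, simply asserting that the theorem follows from Theorems \ref{teor2}, \ref{teor_multiplica} and \ref{alpha3}, which is precisely the decomposition and case analysis you carry out. Your explicit handling of $\alpha\in\{0,1,2\}$ via Proposition \ref{prop2} and Theorem \ref{teor2} fills in the routine details the paper leaves tacit.
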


\section{On the equation $\rdu_k(n)=1$}

Recently, S. K. Chebolu \cite{Ch12} studied the positive integers $n$ such that each $u\in \mathcal{U}(\mathbb{Z}_n)$ satisfies that $u^2=1$, which is equivalent to study the $n$'s that verify the equation $\rdu_2(n)=1$. In this section,  for a given positive integer $k$ we characterize the solutions of the equation $\rdu_k(n)=1$, which clearly is an extension of the work made by S. K. Chebolu. As a consequence, at the end of the section we obtained a new proof of the principal result of S. K. Chebolu \cite{Ch12}.

In general, we say that a ring satisfies the equation $\rdu_k(R)=1$, if its unit group is equal to its $k$-units set; it means that, $a^k=1$ for each $a\in \mathcal{U}(R)$. 

From Theorem \ref{teor2}, we get the result below.

\begin{teor} \label{t9} Let $R$ be a ring such that $\mathcal{U}(R)$ is a cyclic finite group. Then $\rdu_k(R)=1$ if and only if $|\mathcal{U}(R)|$ divides  $k$.
\end{teor}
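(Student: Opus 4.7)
The plan is to simply combine Theorem \ref{teor2} with the definition of $\rdu_k$ given in \eqref{ec_radio}. Since $\mathcal{U}(R)$ is assumed to be cyclic and finite, Theorem \ref{teor2} gives us the explicit formula
\begin{equation*}
\du_k(R) = \gcd(k, |\mathcal{U}(R)|),
\end{equation*}
and therefore
\begin{equation*}
\rdu_k(R) = \frac{|\mathcal{U}(R)|}{\gcd(k, |\mathcal{U}(R)|)}.
\end{equation*}

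First I would observe that $\rdu_k(R) = 1$ is equivalent to $|\mathcal{U}(R)| = \gcd(k, |\mathcal{U}(R)|)$. Then, using the elementary property that $\gcd(k, m) = m$ if and only if $m \mid k$, applied with $m = |\mathcal{U}(R)|$, I immediately conclude that $\rdu_k(R) = 1$ if and only if $|\mathcal{U}(R)|$ divides $k$.

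There is really no obstacle here; the statement is a direct corollary of Theorem \ref{teor2}, and the argument amounts to unwinding the definition of $\rdu_k$ and applying the gcd characterization of divisibility. The proof should fit in two or three lines.
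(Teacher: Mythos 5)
Your proof is correct and matches the paper's approach exactly: the paper gives no explicit proof, stating only that the result follows from Theorem \ref{teor2}, and your two-line argument (unwinding the definition of $\rdu_k$ and using that $\gcd(k,m)=m$ iff $m\mid k$) is precisely the intended derivation.
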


The following theorem gives a condition to a ring $R$ to satisfy the equation $\rdu_k(R)=1$ when $\mathcal{U}(R)$ is isomorphic to a finite external direct product of finite cyclic groups. 

\begin{teor} \label{teor10}Let $R$ be a commutative ring with identity such that $\mathcal{U}(R)\cong C_{r_1}\times\cdots\times C_{r_s}$ for some positive integers $r_1,\ldots,r_s$. Then, $\rdu_k(R)=1$ if and only if $r_i$ divides $k$.
\end{teor}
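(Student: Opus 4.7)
The plan is to reduce the statement to the multiplicative formula from Theorem \ref{teor3} and then exploit the elementary fact that a product of positive integers $\prod a_i$ equals $\prod b_i$, under the componentwise inequalities $a_i \le b_i$, precisely when $a_i = b_i$ for every $i$.

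First I would unwind the definition: by \eqref{ec_radio}, the equation $\rdu_k(R)=1$ is equivalent to $\du_k(R) = |\mathcal{U}(R)|$. Since $\mathcal{U}(R)\cong C_{r_1}\times\cdots\times C_{r_s}$, we have $|\mathcal{U}(R)| = r_1 r_2 \cdots r_s$, and Theorem \ref{teor3} gives $\du_k(R) = \prod_{i=1}^s \gcd(k,r_i)$. Hence the target equation becomes
\[
\prod_{i=1}^{s} \gcd(k,r_i) = \prod_{i=1}^{s} r_i.
\]

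Next I would observe that each factor on the left satisfies $\gcd(k,r_i) \le r_i$, with equality if and only if $r_i$ divides $k$. Therefore, if every $r_i$ divides $k$, the two products agree termwise and the equality is immediate. Conversely, if some $r_j \nmid k$, then $\gcd(k,r_j) < r_j$ (a strict inequality of positive integers), while $\gcd(k,r_i)\le r_i$ for the remaining indices; multiplying these inequalities yields $\prod \gcd(k,r_i) < \prod r_i$, contradicting $\rdu_k(R)=1$.

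There is essentially no obstacle here beyond invoking the already-established multiplicative formula for $\du_k$; the only delicate point is making sure the strict inequality in one coordinate propagates to a strict inequality of the products, which is valid because all factors are positive integers. I would record this conclusion as the theorem, and note that the forward direction is often the one of interest in applications (every $r_i\mid k$ is forced), since it restricts the admissible $k$ to multiples of $\lcm(r_1,\ldots,r_s)$, i.e., the exponent of the group $\mathcal{U}(R)$.
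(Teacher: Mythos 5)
Your proposal is correct and follows essentially the same route as the paper: invoke Theorem \ref{teor3} to rewrite $\rdu_k(R)=1$ as $\prod_i \gcd(k,r_i)=\prod_i r_i$, then use $\gcd(k,r_i)\le r_i$ to force termwise equality, i.e., $r_i\mid k$ for every $i$. Your version is if anything slightly more careful than the paper's, since you explicitly justify why a single strict inequality makes the product strictly smaller.
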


\begin{proof} By the hypothesis $|\mathcal{U}(R)|=\prod^{s}_{i=1}r_i$ and, from Theorem \ref{teor3}, we have that $\du_k(R)=\prod^{s}_{i=1}\gcd(k,r_i)$. Thus, $\rdu_k(R)=1$ if and only if $\prod^{s}_{i=1}\gcd(k,r_i)=\prod^{s}_{i=1}r_i$.

Since that $\gcd(k,r_i)\leq r_i$, then $\gcd(k,r_i)=r_i$, which happens only when each $r_i$ divides $k$.
\end{proof}

In the following theorem, for a given positive integer $n$, we give necessary and sufficient conditions on $k$ such that $\rdu_k(n)=1$.

\begin{teor} \label{teor11} Assume that $n=2^{\alpha}m$, where $\alpha$ is a non-negative integer and $m$ is  an odd positive integer. Then, $\rdu_k(n)=1$ if and only if for each prime divisor $p$ of $m$ we have that $\phi(p^{\nu_p(m)})$ divides $k$  and one of the following conditions is satisfied 
\begin{enumerate}
	\item $\alpha\in \{0,1\}$,
	\item $\alpha=2$ and $k$ is even,
	\item $3\leq \alpha \leq\nu_2(k)+2$.
\end{enumerate}
Particularly, if $k$ is odd then either $n=1$ or $n=2$.
\end{teor}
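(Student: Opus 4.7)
The plan is to exploit the multiplicativity of $\rdu_k$ established in Theorem \ref{teor_multiplica}. Writing $n=2^\alpha m$ with $m$ odd, this gives $\rdu_k(n)=\rdu_k(2^\alpha)\rdu_k(m)$. Since each factor is a positive integer (abelian unit group plus Lagrange), the product equals $1$ if and only if both factors equal $1$. So the proof reduces to two independent characterizations: when $\rdu_k(m)=1$ for $m$ odd, and when $\rdu_k(2^\alpha)=1$.

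For the odd part, I would again use multiplicativity to reduce to prime powers $p^{r_i}$ with $p$ odd. By Proposition \ref{prop2}, $\mathcal{U}(\mathbb{Z}_{p^{r_i}})$ is cyclic of order $\phi(p^{r_i})$, so Theorem \ref{t9} gives $\rdu_k(p^{r_i})=1$ if and only if $\phi(p^{r_i})$ divides $k$. Multiplying over the primes $p\mid m$ yields the condition ``$\phi(p^{\nu_p(m)})\mid k$ for every prime $p\mid m$''.

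For the $2$-part, I would split on $\alpha$ using Proposition \ref{prop2}. When $\alpha\in\{0,1\}$, the unit group is trivial and $\rdu_k(2^\alpha)=1$ automatically. When $\alpha=2$, the unit group is $C_2$, so by Theorem \ref{t9} we need $2\mid k$, i.e.\ $k$ even. When $\alpha\geq 3$, the unit group is $C_2\times C_{2^{\alpha-2}}$, and Theorem \ref{teor10} requires $2\mid k$ and $2^{\alpha-2}\mid k$; the first is implied by the second as soon as $\alpha\geq 3$, and the second amounts to $\alpha-2\leq \nu_2(k)$, i.e.\ $3\leq \alpha\leq \nu_2(k)+2$. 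These three subcases match the statement exactly.

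For the final assertion, if $k$ is odd then case (2) is excluded and case (3) forces $\alpha\leq 2$, so $\alpha\in\{0,1\}$. Moreover, for any odd prime $p$, $\phi(p^{\nu_p(m)})=p^{\nu_p(m)-1}(p-1)$ is even, hence cannot divide an odd $k$ unless there is no such $p$; this forces $m=1$, giving $n=1$ or $n=2$. No step is genuinely difficult once the preliminary machinery is in place; the only thing requiring a bit of care is bookkeeping the case $\alpha=2$ separately from $\alpha\geq 3$ (since $C_2\times C_{2^{0}}$ would collapse) and verifying that the ``$k$ even'' condition in case (2) is not redundantly implied by the divisibility conditions on $m$.
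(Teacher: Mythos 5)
Your proposal is correct and follows essentially the same route as the paper: both use the multiplicativity of $\rdu_k$ to reduce to prime powers, apply Theorem \ref{t9} to the odd prime powers via Proposition \ref{prop2}, and treat $2^\alpha$ by cases on $\alpha$ (the paper cites Theorem \ref{alpha3} where you cite Theorem \ref{teor10}, but the content is identical). Your treatment of the converse and of the final assertion for odd $k$ is in fact slightly more explicit than the paper's.
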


\begin{proof}
The proof is straightforward from the fact that $\rdu_k$ is multiplicative. Indeed, if the prime factorization of $m$ is 
$$m=\prod_{\substack{p|m \\ p \text{ prime}}}p^{\nu_{p}(m)},$$ we have that $\rdu_k(n)=1$ if and only if
$\rdu_k(2^{\alpha})=1$ and $\rdu_k(p^{\nu_{p}(m)})=1$ for each prime divisor $p$ of $m$.

Therefore, from Theorem \ref{t9}, we obtain that $\phi(p^{\nu_p(m)})$ is a divisor of $k$. Besides, $\phi(2^{\alpha})$ divides $k$, when $\alpha\leq 2$, which demonstrates 1 and 2. On the other hand, if $\alpha\geq 3$, as $\du_k(2^{\alpha})=2^{\alpha-1}$,  Theorem \ref{alpha3} implies that $\alpha-2\leq \nu_2(k)$.

Conversely, we can prove that if $\alpha$ satisfies the given conditions, then $\rdu_k(2^{\alpha})=1$.

Since $\phi$ takes even values (except in $1$ or $2$), from the facts we have proved previously, then when $k$ is odd, we get that either $n=1$ or $n=2$.
\end{proof}

The above result allow us to conclude that the study of the equation $\rdu_k(n)=1$, is relevant when $k$ is even. We can now formulate our main result.

\begin{teor}\label{teor12} Assume that $k=2^{\beta}M$, with $\beta > 0$ and $M$ is an odd positive integer. Then $\rdu_k(n)=1$ if and only if $n$ is a divisor of  $$2^{\beta+2}\prod_{p \in \mathcal{A}} p \prod_{ q\in \mathcal{B}} q^{\nu_{q}(M)+1},$$
where 
$$\mathcal{A}:=\left\{p: p \text{ is prime, } p\nmid M \text{ y } p=2^{l}d+1, \text{ with } 0<l\leq \beta \text{ and } d|M\right\},$$
and
$$\mathcal{B}:=\left\{q: q \text{ is prime, } q|M \text{ and } q=2^ld+1, \text{ with } 0<l\leq \beta \text{ and } d|M\right\}.$$
\end{teor}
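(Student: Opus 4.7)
My plan is to reduce the equation $\rdu_k(n)=1$ to the criteria already established in Theorem \ref{teor11}, and then translate the abstract divisibility conditions on $\phi(p^{\nu_p(m)})$ into the concrete membership conditions defining the sets $\mathcal{A}$ and $\mathcal{B}$. Write $n=2^{\alpha}m$ with $m$ odd. Since $k=2^{\beta}M$ is even ($\beta>0$), the three alternatives in Theorem \ref{teor11} collapse into the single inequality $\alpha\leq \beta+2$, which is equivalent to $2^{\alpha}\mid 2^{\beta+2}$. This accounts for the factor $2^{\beta+2}$ in the stated product.

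For the odd part, fix an odd prime $p\mid m$ and set $\gamma:=\nu_p(m)$. By Theorem \ref{teor11} we need $\phi(p^{\gamma})=p^{\gamma-1}(p-1)\mid 2^{\beta}M$. Since $p$ is odd, $p-1$ is even and we can write $p-1=2^{l}d$ with $d$ odd. The relation $(p-1)\mid 2^{\beta}M$ is then equivalent to $1\leq l\leq\beta$ and $d\mid M$, which is the common condition that appears in the definitions of both $\mathcal{A}$ and $\mathcal{B}$.

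Now split on whether $p$ divides $M$. If $p\nmid M$, then $p$ is coprime to $k=2^{\beta}M$, so the factor $p^{\gamma-1}$ in $\phi(p^{\gamma})$ forces $\gamma=1$; thus $p\in\mathcal{A}$ and contributes at most one factor of $p$ to $n$. If instead $p\mid M$, then $p$ is odd and hence coprime to $2^{\beta}$, so $p^{\gamma-1}\mid k$ is equivalent to $p^{\gamma-1}\mid M$, i.e.\ $\gamma\leq\nu_p(M)+1$; thus $p\in\mathcal{B}$ and contributes at most $p^{\nu_p(M)+1}$ to $n$. These two contributions are exactly the prime-power factors in the products over $\mathcal{A}$ and $\mathcal{B}$, and by multiplicativity of $\rdu_k$ (Theorem \ref{teor_multiplica}) the combined conditions are necessary and sufficient. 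The converse direction is immediate: any divisor of the stated product satisfies the hypotheses of Theorem \ref{teor11} at each prime, and hence has $\rdu_k(n)=1$.

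The main obstacle I anticipate is the clean case-split on whether the prime $p\mid m$ also divides $M$, since this is where the exponent bound jumps from $\gamma=1$ (for $p\in\mathcal{A}$) to $\gamma\leq \nu_p(M)+1$ (for $p\in\mathcal{B}$). Once this dichotomy is recorded and combined with the $p$-adic bookkeeping of $p-1=2^{l}d$, the characterization of the largest admissible $n$ as the displayed product follows directly.
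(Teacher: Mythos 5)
Your proposal is correct and follows essentially the same route as the paper: reduce to Theorem \ref{teor11}, deduce $\alpha\leq\beta+2$ from the evenness of $k$, and split the odd primes of $n$ according to whether they divide $M$ to obtain the exponent bounds defining $\mathcal{A}$ and $\mathcal{B}$. The only cosmetic difference is that you factor the condition $\phi(p^{\gamma})\mid k$ into the coprime pieces $p^{\gamma-1}\mid k$ and $(p-1)\mid k$ explicitly before the case split, which the paper does implicitly within each case.
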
 

\begin{proof}  Suppose that $n=2^{\alpha}m$, with $m$ an odd integer and $\alpha$ a non-negative integer. First of all, since $k$ is even Theorem \ref{teor11} guarantees the inequality $0\leq \alpha \leq \beta+2$.

Additionally, Theorem \ref{teor11} implies that $\phi(t^{\nu_t(m)})|k$ for each prime divisor $t$ of $m$; that is
\begin{equation}
t^{\nu_t(m)-1}(t-1)|k.
\label{eqt}
\end{equation}
We will study the expression \eqref{eqt} in two cases, depending on whether or not $t$ divides $M$.

\begin{caso}Assume that $t$ does not divide $M$. Then, \eqref{eqt} implies that $\nu_t(m)=1$ and $t-1|k$. Thus, 
$$t-1=2^ld,$$
with $0<l\leq \beta$ and $d$ is a divisor of $M$. The primes $t$ that verify the last conditions are joined in the set $\mathcal{A}$ defined at the statement of the theorem.
\end{caso}

\begin{caso} Suppose that $t$ divides $M$. Again, from \eqref{eqt} we obtain that $\nu_t(m)-1\leq \nu_t(M)$ and  $t-1$ is a divisor of
$$\dfrac{k}{t^{\nu_t(M)}},$$
this means $$t-1=2^{l}d,$$
where $0<l\leq \beta$ and $d$ is a divisor of $M$. These primes are the elements of the set $\mathcal{B}$ of the theorem.
\end{caso}
Therefore, $n$ is a solution of the equation $\rdu_k(n)=1$, if it has the form
$$n=2^{\alpha}\prod_{p \in \mathcal{A}} p^{r(p)} \prod_{ q\in \mathcal{B}} q^{s(q)},$$
with $0\leq \alpha\leq \beta+2$, $r(p)\in \{0,1\}$ for each $p\in \mathcal{A}$ and $0\leq s(q) \leq \nu_{q}(M)+1$ for each $q\in \mathcal{B}$. 
\end{proof}
An immediate consequence of the above theorem is the following result.
\begin{coro}
 Assume that $k=2^{\beta}M$, with $\beta > 0$ and $M$ an odd positive integer. Then, the number of solutions of $\rdu_k(n)=1$
is given by
$$(\beta+3)2^{|\mathcal{A}|}\prod_{q\in \mathcal{B}}(\nu_{q}(M)+2),$$
where $\mathcal{A}$ and $\mathcal{B}$ are as in the previous theorem.
\end{coro}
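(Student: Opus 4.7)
The plan is to observe that the corollary is essentially a divisor-counting exercise applied to the explicit bound $N := 2^{\beta+2}\prod_{p \in \mathcal{A}} p \prod_{q \in \mathcal{B}} q^{\nu_q(M)+1}$ from Theorem \ref{teor12}. By that theorem, $n$ solves $\rdu_k(n)=1$ if and only if $n \mid N$, so the number of solutions equals the number of positive divisors $\tau(N)$.

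First I would verify that the prime factorization of $N$ is exactly as written, i.e.\ that the primes $2$, those in $\mathcal{A}$, and those in $\mathcal{B}$ are pairwise distinct. This is immediate: the elements of $\mathcal{A}$ and $\mathcal{B}$ are odd primes by definition (since $t=2^l d+1$ with $l\geq 1$ and $d$ odd forces $t$ odd, so $2$ is excluded), and $\mathcal{A}\cap\mathcal{B}=\varnothing$ because membership in $\mathcal{A}$ requires $p\nmid M$ while membership in $\mathcal{B}$ requires $q\mid M$.

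Having confirmed that the prime powers in $N$ are independent factors, I would then apply the standard multiplicative divisor-count formula $\tau\bigl(\prod p_i^{a_i}\bigr)=\prod (a_i+1)$. The factor $2^{\beta+2}$ contributes $\beta+3$ choices of exponent; each prime $p\in\mathcal{A}$ appears to the first power and so contributes a factor of $2$, giving an aggregate of $2^{|\mathcal{A}|}$; each prime $q\in\mathcal{B}$ appears to the exponent $\nu_q(M)+1$ and so contributes the factor $\nu_q(M)+2$. Multiplying these together yields precisely
\[
(\beta+3)\,2^{|\mathcal{A}|}\,\prod_{q\in \mathcal{B}}(\nu_{q}(M)+2).
\]

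There is really no hard step here; the only thing one must be careful about is the distinctness of the primes in the three groups and the correct translation of the exponent $\nu_q(M)+1$ appearing in $N$ into the factor $\nu_q(M)+2$ in the divisor count. Both points are checked in one line, so the proof is essentially a direct invocation of Theorem \ref{teor12} followed by the elementary formula for $\tau$.
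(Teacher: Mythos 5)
Your proof is correct and is exactly the argument the paper has in mind (the paper states the corollary without proof as an immediate consequence of Theorem \ref{teor12}): the solutions are precisely the divisors of $N=2^{\beta+2}\prod_{p\in\mathcal{A}}p\prod_{q\in\mathcal{B}}q^{\nu_q(M)+1}$, and the count is $\tau(N)$ computed via the standard formula. Your checks that the primes $2$, those in $\mathcal{A}$, and those in $\mathcal{B}$ are pairwise distinct are the right (and only) points needing verification, and they are handled correctly.
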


Theorem \ref{teor12} allow us to obtain some well known results as we will see in the next section; specially here we give another proof of the principal result of S.~	K. Chebolu about the divisors of $24$, see \cite[Thm.\ 1.1]{Ch12}.
\begin{coro}\label{demo24} Let $n$ be a positive integer. Then, $n$ has the diagonal property if and only if $n$ is a divisor of $24$.
\end{coro}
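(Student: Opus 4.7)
The plan is to derive the corollary as a direct specialisation of Theorem \ref{teor12}. First I would recast the diagonal property in the language of the paper: saying that $u^2=1$ for every $u\in\mathcal{U}(\mathbb{Z}_n)$ is exactly the statement $\mathcal{U}_2(n)=\mathcal{U}(\mathbb{Z}_n)$, equivalently $\rdu_2(n)=1$. So the corollary amounts to characterising the solutions of $\rdu_k(n)=1$ for the particular value $k=2$.

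Next I would apply Theorem \ref{teor12} with $k=2$. Writing $k=2^{\beta}M$ with $M$ odd forces $\beta=1$ and $M=1$. With these values in hand, the two sets in the statement of Theorem \ref{teor12} become trivial to list: the set $\mathcal{B}$ consists of odd primes dividing $M=1$, and is therefore empty; the set $\mathcal{A}$ consists of primes $p$ of the form $p=2^{l}d+1$ with $0<l\le 1$ and $d\mid 1$, so necessarily $l=1$ and $d=1$, yielding the single prime $p=3$. Hence $\mathcal{A}=\{3\}$.

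Substituting these into the product $2^{\beta+2}\prod_{p\in\mathcal{A}}p\prod_{q\in\mathcal{B}}q^{\nu_q(M)+1}$ gives $2^{3}\cdot 3=24$, and Theorem \ref{teor12} concludes that $\rdu_2(n)=1$ if and only if $n\mid 24$. Since no actual number-theoretic work is left to do, I do not foresee an obstacle; the main thing to be careful about is the bookkeeping of the empty product (which equals $1$ by the convention fixed in the introduction) and the fact that $\beta=1>0$, so Theorem \ref{teor12} genuinely applies rather than the special case $k$ odd handled separately in Theorem \ref{teor11}.
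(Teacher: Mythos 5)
Your proposal is correct and follows essentially the same route as the paper: both reduce the diagonal property to the equation $\rdu_2(n)=1$ and then apply Theorem \ref{teor12} with $\beta=1$, $M=1$ to compute $\mathcal{A}=\{3\}$, $\mathcal{B}=\varnothing$, and the bound $2^{3}\cdot 3=24$. Your version is, if anything, slightly more explicit about why $\mathcal{A}$ and $\mathcal{B}$ take these values.
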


\begin{proof}
Suppose that $n$ has the diagonal property, that is $\rdu_2(n)=1$. It is sufficient to find the sets $\mathcal{A}$ and $\mathcal{B}$ of the statement of Theorem \ref{teor12}. In fact, it is easy to check that $\mathcal{A}=\{3=2^1\times 1+1\}$ and $\mathcal{B}=\varnothing$. Therefore, the solutions of the given equation are the divisors of $2^{1+2}\times 3=24$. 

For the reciprocal, it is enough to verify that $\rdu_2(n)=1$ when $n$ is a divisor of $24$.
\end{proof}

In the sequel, we give some examples.

\begin{ejem} Take $k=10=2\times 5$. Then  
$$\mathcal{A}=\{3=2^1\times 1+1,11=2^1\times 5+1\} \text{ and } \mathcal{B}=\varnothing.$$
Thus, the roots of $\rdu_{10}(n)=1$ are the divisors of $2^{1+2}\times 3\times 11=264$.
\end{ejem}

In the proof of the Corollary \ref{demo24} and in the above example  $\mathcal{B}$ is empty; however, this not happen always as we can see in the next example.

\begin{ejem} Consider $k=252=2^2\times 3^2\times 7$. Then, 
\begin{align*}
\mathcal{A}&=\{5,13,19,29,37,43, 127\} \text{ and}\\
\mathcal{B}&=\{3,7\},
\end{align*}
where, for instance, $3=2^1\times 1+1$ and $7=2^1\times 3+1$. Therefore, the solutions of $\rdu_{252}(n)=1$ are the divisors of 
$$2^{4}\times 5\times 13\times 19\times 29\times 37\times 43\times 127\times 3^3\times 7^2=153185861359440,$$ 
and there are $5\times 2^7\times (3+1)\times (2+1)=7680$ solutions.
\end{ejem}

\section{Consequences and further work}

Finally, in this section, we present how the results demonstrated previously serve to obtain some well known results about  Carmichael numbers \seqnum{A002997}, and to stablish some connections with two of its generalizations: Kn\"odel numbers \seqnum{A033553}, \seqnum{A050990}, \seqnum{A050993} and generalized Carmichale numbers \seqnum{A014117}, see \cite{CP05,HH99,M62,OEIS}.
\subsection{Carmichael numbers}

Let $a$ be a positive integer. We say that a composite number $n$ is a pseudoprime base $a$ if 
\begin{equation}\label{eqFLT}
a^{n-1}\equiv 1 \pmod n.
\end{equation}
When we do not know whether $n$ is composite or prime, but satisfies the congruence \eqref{eqFLT}, we say that $n$ is a probable prime base $a$.

The next theorem gives the number of bases $a$ in $\mathcal{U}(\mathbb{Z}_n)$ such that $n$ is a probable base $a$, see \cite[p. \ 165]{CP05}

\begin{teor}Let $n$ be an odd positive integer. Then the number of bases $a$ such that $n$ is a probable prime base $a$ is $$B_{pp}(n)= \prod_{p|n}\gcd(n - 1, p-1).$$
\end{teor}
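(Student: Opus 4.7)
The plan is to recognize that $B_{pp}(n)$ is exactly $\du_{n-1}(n)$ in the notation of the paper: by definition, $n$ is a probable prime base $a$ when $a^{n-1}\equiv 1\pmod n$, so the bases being counted are precisely the $(n-1)$-units modulo $n$. Once this reformulation is made, the theorem follows from the machinery already developed, specifically Theorem \ref{du}.

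First I would set $k = n-1$ and, using the hypothesis that $n$ is odd, write $n = 2^0 \cdot m = m$ with $m = \prod_{i=1}^r p_i^{r_i}$ its prime factorization into odd primes. The case $\alpha = 0$ of Theorem \ref{du} then gives
\begin{equation*}
\du_{n-1}(n) \;=\; \prod_{i=1}^{r}\gcd\bigl(n-1,\,\phi(p_i^{r_i})\bigr) \;=\; \prod_{i=1}^{r}\gcd\bigl(n-1,\,p_i^{r_i-1}(p_i-1)\bigr).
\end{equation*}

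The only remaining step is to simplify each factor. Since $p_i \mid n$, we have $\gcd(n-1, p_i) = 1$, so $\gcd(n-1, p_i^{r_i-1}) = 1$ and therefore $\gcd(n-1, p_i^{r_i-1}(p_i-1)) = \gcd(n-1, p_i - 1)$. Substituting gives the claimed formula
\begin{equation*}
B_{pp}(n) \;=\; \du_{n-1}(n) \;=\; \prod_{p \mid n}\gcd(n-1, p-1).
\end{equation*}

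There is essentially no obstacle here: the content of the theorem is already contained in Theorem \ref{du} together with the elementary observation that primes dividing $n$ are coprime to $n-1$. The only point worth flagging explicitly is the identification of $B_{pp}(n)$ with $\du_{n-1}(n)$, which makes it clear why the odd hypothesis is imposed (so that the $2$-adic complications of Theorem \ref{du} do not enter). This proof is in fact noticeably shorter than a direct computation from the Chinese Remainder Theorem, which is the whole point of presenting it as a consequence of the earlier framework.
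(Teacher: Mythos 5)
Your proof is correct and follows exactly the paper's own argument: identify $B_{pp}(n)$ with $\du_{n-1}(n)$, apply Theorem \ref{du} in the odd case, and strip the factor $p^{\nu_p(n)-1}$ from each gcd using $\gcd(n-1,p)=1$. No differences worth noting.
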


\begin{proof} 
Let $n=\prod_{p|n}p^{\nu_p(n)}$ be the prime factorization of $n$. First, we observe that $B_{pp}(n)=\du_{n-1}(n)$. Then, from Theorem \ref{du}, we have that
$$B_{pp}(n)=\du_{n-1}(n)=\prod_{p|n}\gcd(n-1,\phi(p^{\nu_p(n)}))=\prod_{p|n}\gcd(n-1,p^{\nu_p(n)-1}(p-1)).$$
Since $\gcd(n-1, p)=1$, we obtain that 
$$B_{pp}(n)=\prod_{p|n}\gcd(n-1, p-1).$$
\end{proof}

\begin{defi} Let $n$ be an odd composite integer. We say that $n$ is a Carmichael number if $a^{n-1}\equiv 1 \pmod n$ for each positive integer $a$ relatively prime to $n$.
\end{defi}
Actually, $n$ is a Carmichael number if it is composite and $\rdu_{n-1}(n)=1$. This allow us, to use the previous theorems to prove some known results about Carmichael numbers.

\begin{teor}\label{teor14} Let $n$ be an odd and composite integer and $k$ relatively prime to $n$. Then, $\rdu_k(n)=1$ if and only if $n$ is squarefree and $p-1$ divides $k$ for each prime divisor $p$ of $n$.
\end{teor}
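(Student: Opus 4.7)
The plan is to reduce to prime powers via multiplicativity and then use the cyclic structure of $\mathcal{U}(\mathbb{Z}_{p^{\alpha}})$ for odd $p$. Write $n=\prod_{i=1}^{r}p_{i}^{\alpha_{i}}$ with all $p_{i}$ odd primes (we may do this because $n$ is odd). By Theorem~\ref{teor_multiplica}, $\rdu_{k}(n)=1$ if and only if $\rdu_{k}(p_{i}^{\alpha_{i}})=1$ for each $i$, so the problem reduces to analysing $\rdu_{k}$ at each odd prime power dividing $n$.

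Next I would invoke Proposition~\ref{prop2}, which gives that $\mathcal{U}(\mathbb{Z}_{p_{i}^{\alpha_{i}}})$ is cyclic of order $\phi(p_{i}^{\alpha_{i}})=p_{i}^{\alpha_{i}-1}(p_{i}-1)$. Theorem~\ref{t9} then says
\[
\rdu_{k}(p_{i}^{\alpha_{i}})=1 \iff p_{i}^{\alpha_{i}-1}(p_{i}-1) \mid k.
\]
Now I would use the hypothesis $\gcd(k,n)=1$, which in particular forces $\gcd(k,p_{i})=1$ for every $i$. Combined with $p_{i}^{\alpha_{i}-1}\mid k$, this compels $\alpha_{i}-1=0$, i.e.\ $\alpha_{i}=1$, showing $n$ must be squarefree, and the remaining divisibility then reads simply $p_{i}-1\mid k$. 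This proves the forward implication.

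For the converse, assume $n$ is squarefree with $p-1\mid k$ for every prime $p\mid n$. Then for each such $p$, $\phi(p)=p-1\mid k$, so $\rdu_{k}(p)=1$ by Theorem~\ref{t9}, and multiplicativity (Theorem~\ref{teor_multiplica}) yields $\rdu_{k}(n)=1$.

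I do not anticipate any real obstacle here: the entire argument is a packaging of Theorems \ref{teor_multiplica}, \ref{t9} and Proposition~\ref{prop2}, together with the simple arithmetic observation that $\gcd(k,p)=1$ is incompatible with $p^{\alpha-1}\mid k$ unless $\alpha=1$. The only subtlety worth flagging is that the hypothesis ``$n$ composite'' plays no role in the biconditional itself; it is only needed implicitly so the statement remains meaningful in the intended Carmichael-number context of the subsequent discussion.
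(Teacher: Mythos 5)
Your proof is correct and follows essentially the same route as the paper: the paper cites its Theorem~\ref{teor12} to get that $\rdu_k(n)=1$ iff $p^{\nu_p(n)-1}(p-1)\mid k$ for each prime $p\mid n$, and then uses $\gcd(k,n)=1$ exactly as you do; you merely unpack that citation into its ingredients (multiplicativity, Proposition~\ref{prop2}, Theorem~\ref{t9}), which is how the paper itself derives the underlying fact. Your closing remark that ``$n$ composite'' is not needed for the biconditional is accurate and matches the paper's usage.
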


\begin{proof} Assume that $n=\prod_{p|n}p^{\nu_p(n)}$ is the prime factorization of $n$. By Theorem \ref{teor12}, we have that $\rdu_k(n)=1$ if and only if $\phi(p^{\nu_p(n)})=p^{\nu_p(n)-1}(p-1)$ divides $k$, which only happens when $\nu_p(n)=1$ and $p-1$ divides $k$.
\end{proof}

The last result is a generalization of the Korselt criterion, see \cite[Thm.\ 3.4.6]{CP05}.

\begin{coro}[Korselt criterion] Suppose that $n$ is an odd and composite integer. Then, $n$ is a Carmichael number if and only if $n$ is squarefree and for each prime $p$ dividing $n$ we have $p-1$ divides $n-1$.
\end{coro}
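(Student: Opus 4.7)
The plan is to obtain this corollary as an immediate specialization of Theorem \ref{teor14}, with $k$ chosen to be $n-1$. First I would recall that, by the definition just given, an odd composite integer $n$ is a Carmichael number precisely when $a^{n-1}\equiv 1\pmod{n}$ for every $a\in\mathcal{U}(\mathbb{Z}_n)$, and that this condition is exactly the statement $\mathcal{U}_{n-1}(n)=\mathcal{U}(\mathbb{Z}_n)$, i.e.\ $\rdu_{n-1}(n)=1$.

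Next I would verify the hypothesis of Theorem \ref{teor14} for the choice $k=n-1$: since consecutive integers are coprime, $\gcd(n-1,n)=1$, so $k$ is indeed relatively prime to $n$. With this verification in hand, Theorem \ref{teor14} applied to $k=n-1$ translates directly into the claim: $\rdu_{n-1}(n)=1$ if and only if $n$ is squarefree and $p-1$ divides $n-1$ for every prime $p$ dividing $n$. Combining this equivalence with the reformulation of the Carmichael condition from the first step yields the stated iff.

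There is essentially no obstacle here, since all the content sits in Theorem \ref{teor14} (which in turn rests on Theorem \ref{teor12} via the factorization $\phi(p^{\nu_p(n)})=p^{\nu_p(n)-1}(p-1)$ and the observation that $p^{\nu_p(n)-1}\mid k$ forces $\nu_p(n)=1$ when $\gcd(k,n)=1$). The only thing to double-check is the coprimality $\gcd(n-1,n)=1$ so that Theorem \ref{teor14} genuinely applies; after that the corollary is just a rephrasing.
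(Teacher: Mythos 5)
Your proposal is correct and is exactly the route the paper intends: the corollary is the specialization of Theorem \ref{teor14} to $k=n-1$, using $\gcd(n-1,n)=1$ and the observation already made in the paper that $n$ is Carmichael precisely when it is composite and $\rdu_{n-1}(n)=1$. Nothing is missing.
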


\begin{prop} Any Carmichael number has at least three prime factors.
\end{prop}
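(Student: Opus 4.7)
The plan is to argue by contradiction, supposing that $n$ is a Carmichael number with only two prime factors and deriving an incompatibility from the Korselt criterion just proved.

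First I would note that a Carmichael number is odd and composite, so it must have at least two prime factors. Suppose, for contradiction, that $n$ has exactly two, so $n = pq$ with $p$ and $q$ distinct odd primes. By the Korselt criterion, $n$ is squarefree (which this form already satisfies) and both $p-1$ and $q-1$ divide $n-1$. Without loss of generality, assume $p < q$, so $p - 1 < q - 1$ and $p - 1 \geq 2$ (since $p \geq 3$).

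Next, the key manipulation is to rewrite $n - 1$ in a way that isolates a smaller remainder. Specifically,
\[
n - 1 = pq - 1 = p(q-1) + (p - 1).
\]
Since $q - 1$ divides both $n - 1$ and $p(q-1)$, it must also divide the difference $p - 1$. But $0 < p - 1 < q - 1$, so $q - 1$ cannot divide $p - 1$. This contradiction shows that two prime factors are impossible, so any Carmichael number must have at least three.

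The proof is essentially a one-line divisibility argument once Korselt is in hand, so there is no real obstacle; the only care needed is to ensure $p - 1$ is strictly positive (guaranteed by $n$ being odd, hence $p \geq 3$) so that the divisibility $q - 1 \mid p - 1$ is genuinely impossible rather than vacuously true.
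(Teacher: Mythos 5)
Your proof is correct, but it takes a slightly different route from the paper's. The paper does not invoke the Korselt criterion; it works directly with the ratio function, writing $\rdu_{pq-1}(pq)=\rdu_{pq-1}(p)\rdu_{pq-1}(q)=1$ to deduce $\gcd(pq-1,p-1)=p-1$ and $\gcd(pq-1,q-1)=q-1$, and then uses the \emph{symmetric} identity $pq-1=(p-1)(q-1)+(p-1)+(q-1)$ to conclude that both gcds equal $\gcd(p-1,q-1)$, forcing $p-1=q-1$, a contradiction with $p\neq q$. You instead order the primes, use the asymmetric decomposition $pq-1=p(q-1)+(p-1)$, and get the contradiction from the size estimate $0<p-1<q-1$, so that $q-1\nmid p-1$. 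Your version is a little leaner: it needs only the divisibility condition for the larger prime and closes with a one-line size argument, whereas the paper's version keeps the symmetry and avoids choosing an ordering. Both rest on the same underlying fact (each $p-1$ divides $pq-1$), and both implicitly use squarefreeness to reduce to the case $n=pq$ with distinct primes, so the difference is one of packaging rather than substance; your care in noting $p-1\geq 2$ so the non-divisibility is genuine is a nice touch.
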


\begin{proof} Suppose that $n=pq$, with $p$ and $q$ different primes. As $\rdu_{pq-1}(n)=1$, then $\rdu_{pq-1}(pq)=\rdu_{pq-1}(p)\rdu_{pq-1}(q)=1$.

This implies $\rdu_{pq-1}(p)=1$ and $\rdu_{pq-1}(q)=1$. Now, since
\begin{align*}
\rdu_{pq-1}(p)&=\frac{\phi(p)}{\gcd(pq-1, \phi(p))}\\
&=\frac{p-1}{\gcd(pq-1, p-1)},
\end{align*}
then $\gcd(pq-1,p-1)=p-1$. Similarly, we can prove that $\gcd(pq-1,q-1)=q-1$.
Furthermore, we have that $\gcd(pq-1, p-1)=\gcd(pq-1, q-1)$ because  \linebreak $pq-1=(p-1)(q-1)+(p-1)+(q-1)$; that is
$p-1=q-1$, which is a contradiction.
\end{proof}

A. Makowski \cite{M62}  gave an extension to the concept of Carmichael number, named Kn\"odel numbers, in honour of the Austrian mathematician W. Kn\"odel \cite[p.\ 125]{R96}. 

\begin{defi} For $i\geq 1$, let $\mathcal{K}_i$ be the set of all the composite integers $n>i$ such that $a^{n-i}\equiv 1 \pmod{n}$  for any positive integer $a$ relatively prime to $n$. We call $\mathcal{K}_i$ the $i$-Kn\"odel set and its elements the $i$-Kn\"odel numbers.
\end{defi}
It is clear that, $\mathcal{K}_1$ is the set Carmichael numbers. Similarly, as we did with the Carmichael numbers, we can give an interpretation of the Kn\"odel sets from the concepts studied in this article; in fact, it is easy to stablish that  $n\in \mathcal{K}_i$ if and only if  $n$ is composite and $\rdu_{n-i}(n)=1$. 

For a fixed $k\in \mathbb{Z}^+$, using Theorem \ref{teor12}, we demonstrated that the set of solutions of the equation $\rdu_k(n)=1$ is finite. Although, this is not necessarily always true when $k$ depends on $n$; for instance $\mathcal{K}_i$ is infinite, see \cite{AGP94,M62}. 

 L. Halbeisen and N. Hungerb\"uhler \cite{HH99} proposed another generalization of the Carmichael number concept.

\begin{defi}
Fix an integer $k$, and let be
$$C_k=\{n\in \mathbb{N}: \min\{n,n+k\}>1 \text{ and } a^{n+k}\equiv a \pmod{n} \text{ for all } a\in \mathbb{N}\}.$$
\end{defi} 

L. Halbeisen and N. Hungerbühler proved that $C_1=\{2,6,42,1806\}$ and $C_k$ is infinite if $1-k>1$ is squarefree. We can stablish that for $k\in \mathbb{Z}$
\begin{equation}
C_k\subset \{n\in \mathbb{Z}^+: \rdu_{n+k-1}(n)=1\}.
\label{eq:conten}
\end{equation}

Therefore, the set $C_1$ joint with $1$ are the solutions of $\rdu_{n}(n)=1$. Furthermore, expression \eqref{eq:conten} shows that when $1-k>1$ is squarefree, there are infinitely many solutions to the equation $\rdu_{n+k-1}(n)=1$.

In the sequel, we pose certain questions regarding the number of solutions of the equation $\rdu_k(n)=1$, when $k$ depends on $n$.

\begin{itemize}

\item Are there infinitely many $n\in \mathbb{Z}^+$ shuch that $\rdu_{n+1}(n)=1$?

From Theorem \ref{teor14}, this question is equivalent to ask for the infinitude of positive squarefree integers $n$ such that $p-1$ divides $n+1$ for each prime divisor $p$ of $n$. 
 
Let $i\in \mathbb{N}$ and $a, b\in \mathbb{Z}$ 

\item Are there infinitely many $n\in \mathbb{Z}^+$ such that $\rdu_{n+i}(n)=1$? 
\item Are there infinite $n\in \mathbb{Z}^+$ such that $\rdu_{an+b}(n)=1$? 

When $b=0$, from Theorem \ref{teor11}, it is enough to take $n$ as a power of $2$.

\item In general, are there infinitely many $n\in \mathbb{Z}^+$ such that $\rdu_{f(n)}(n)=1$, for a polynomial $f(x)$ with integer coefficients? If the answer is negative, for which polynomials $f(x)$ the equation $\rdu_{f(n)}(n)=1$ has infinite many solutions and for which has finite many solutions?

\end{itemize}
\section{Acknowledgements}
The authors are members of the research group: ``Algebra, Teor\'ia de N\'umeros
y Aplicaciones, ERM". J.~H. Castillo was partially supported by the Vicerrector\'ia de Investigaciones Postgrados y Relaciones Internacionales  at Universidad de Nari\~no. The authors also were partially supported by COLCIENCIAS under the research project ``Aplicaciones a la teor\'ia de la informaci\'on y comunicaci\'on de los conjuntos de Sidon y sus generalizaciones (110371250560)''. 

We are grateful to Professor Gilberto Garc\'ia-Pulgar\'in for his suggestions that help to improve this article.

\bigskip
\hrule
\bigskip

\noindent 2010 {\it Mathematics Subject Classification}: Primary 
11A05, 11A07, 11A15, 16U60.

\noindent \emph{Keywords: } Diagonal property, diagonal unit, unit set of a ring, $k$-unit, Carmichael number, Kn\"odel number and Carmichael generalized number.

\bigskip
\hrule
\bigskip

\noindent (Concerned with sequences \seqnum{A033553}, \seqnum{A050990}, \seqnum{A050993} and \seqnum{A014117}.)

\bigskip
\hrule

\end{document}